\newtheorem{theorem}{Theorem}[section]
\newtheorem{lemma}[theorem]{Lemma}
\theoremstyle{definition}
\newtheorem{definition}[theorem]{Definition}
\newtheorem{corollary}[theorem]{Corollary}
\theoremstyle{remark}
\newtheorem{remark}[theorem]{Remark}
\numberwithin{equation}{section}
\begin{document}
\def\C{\mathbb C}
\def\R{\mathbb R}
\def\X{\mathbb X}
\def\cA{\mathcal A}
\def\cT{\mathcal T}
\def\Z{\mathbb Z}
\def\Y{\mathbb Y}
\def\Z{\mathbb Z}
\def\N{\mathbb N}
\def\cal{\mathcal}

\title[Asymptotic Behavior of Individual Orbits of Discrete Systems]{Asymptotic Behavior of Individual Orbits of Discrete Systems}
\author{Nguyen Van Minh}
\address{Department of Mathematics, University of West Georgia,
Carrollton, GA 30118. USA} \email{vnguyen@westga.edu}
\thanks{The author is grateful to the anonymous referee for his carefully reading the manuscript and pointing out several inaccuracies and suggestions to improve the presentation of this paper.}

\subjclass{Primary: 47D06; Secondary: 47A35; 39A11}
\keywords{Katznelson-Tzafriri Theorem, discrete system, individual
orbit, stability, asymptotically almost periodic.}

\begin{abstract}
We consider the asymptotic behavior of bounded solutions of the
difference equations of the form $x(n+1)=Bx(n) + y(n)$ in a Banach
space $\X$, where $n=1,2,...$, $B$ is a linear continuous operator
in $\X$, and $(y(n))$ is a sequence in $\X$ converging to $0$ as
$n\to\infty$. An obtained result with an elementary proof says that
if $\sigma (B) \cap \{ |z|=1\} \subset \{ 1\}$, then every bounded
solution $x(n)$ has the property that $\lim_{n\to\infty}
(x(n+1)-x(n)) =0$. This result extends a theorem due to
Katznelson-Tzafriri. Moreover, the techniques of the proof are
furthered to study the individual stability of solutions of the
discrete system. A discussion on further extensions is also given.
\end{abstract}
\date{\today}

\maketitle

\section{Introduction, Notations and Preliminaries} \label{section 1}
Suppose that $T$ is a power-bounded linear continuous operator in a given complex Banach space $\X$, i.e., $\sup_{n\in \N} \| T^n\| <\infty$.
In \cite[Theorem 1]{kattza} it is proven  that
 $\lim_{n\to\infty} \| T^{n+1}-T^n \| =0$ if $\sigma (T)\cap \{ |z|=1\}\subset \{1
\}$. As noted in \cite{vu2}, this assertion is actually equivalent to a little weaker one that for each $x_0\in\X$,
$\lim_{n\to\infty} \| T^{n+1}x_0-T^nx_0 \| =0$ if $\sigma (T)\cap \{ |z|=1\}\subset \{1
\}$.
An elegant proof of this assertion, which we refer to as Katznelson-Tzafriri Theorem, was given in \cite{vu}.
There are numerous works on extensions and applications of this result of which to name a few the reader is referred to e.g. \cite{allran},
\cite{arebathieneu}, \cite{arepru}, \cite{baspry} \cite{batneerab}, \cite{chitom},
\cite{eststrzou2}, \cite{kalmonoletom}, \cite{lauvu}, \cite{min},  \cite{mus}, \cite{nee}, \cite{vu}, and their references.

\medskip
It is the first purpose of this note to extend the
Katznelson-Tzafriri Theorem to
difference equations of the form
\begin{equation}\label{eq}
x(n+1)=Bx (n)+y(n),  \quad x(n)\in \X, n\in \N ,
\end{equation}
where $x(n)\in\X$, $B$ is a linear continuous operator acting in $\X$ that is {\it not necessarily assumed to be
power-bounded}, $y(n)\in  \X$ is a sequence satisfying $\lim_{n\to
\infty} y (n)=0.$ Our main result is Theorem \ref{the main} that is
{\it proven by an elementary method} which can be furthered to study the
stability of individual solutions of (\ref{eq}). A Tauberian
theorem (Theorem \ref{the tec}) is stated and then used to prove
Theorem \ref{the main2} on the asymptotical stability of individual
solutions of (\ref{eq}). This result may be seen as the discrete
version of several results in \cite{arebat,batneerab,lauvu,min3}, and it
complements a result on strong stability of solutions in
\cite{vulyu}. For a more complete account of results and methods in this direction the reader is referred to \cite{arebathieneu,chitom,nee}.

\bigskip
 In this note we will use the following notations: $\N =\{ 1,2,\cdots \}$, $\Z$ -  the set of all integers, $\R$ - the set of reals, $\C$ - the complex plane with $\Re z$ denoting the real part of $z\in\C$, $\X$ - a given  complex Banach space.
  A sequence in $\X$ will be denoted by $(x(n))_{n=1}^\infty$, or, simply by $(x(n))$, and the spaces of sequences
\begin{eqnarray*}
l^\infty (\X) &:=& \{ (x(n))  \subset \X | \
\sup_{n\in\N} \| x (n)\| <\infty \} \\
c_0 &:=& \{ (x(n))\subset \X | \ \lim_{n\to\infty}x(n) =0\}
\end{eqnarray*}
are equipped with sup-norm.
The shift operator $S$ acts in $l^\infty (\X)$ as follows:
\begin{eqnarray*}
Sx(n) = x(n+1),\quad n\in\N , x\in l^\infty (\X) .
\end{eqnarray*}
In this paper, for a complex Banach space $\X$, the space of all
bounded linear operators acting in $\X$ is denoted by $L(\X)$; $\rho
(B), \sigma (B), R\sigma (B), Ran (B)$ denote  the resolvent set,
spectrum, residual spectrum, range of $B\in L(\X)$, respectively. It
is well known that the operator $S$ defined as above is a
contraction. Consider the quotient space $\Y := l^\infty (\X) /c_0 $
with the induced norm. The equivalent class containing a sequence
$x\in l^\infty (\X)$ will be denoted by $\bar x$. Since $S$ leaves
$c_0$ invariant it induces a bounded linear operator $\bar S$ acting
in $\Y$. Moreover, one notes that $\bar S$ is a surjective isometry.
As a consequence, $\sigma (\bar S)\subset \Gamma$, where $\Gamma$
denotes the unit circle in the complex plane. We will use the
following estimate for the resolvent of the isometry $\bar S$ whose
proof can be easily obtained:
\begin{equation}\label{112}
\| R(\lambda ,\bar S)\| \le \frac{1}{||\lambda |-1|}, \quad \mbox{for all} \ |\lambda | \not= 1.
\end{equation}

\section{Main Results}
\subsection{Katznelson-Tzafriri Theorem for Individual Orbits}
Consider the difference equation (\ref{eq}) with $(y(n))\in c_0$. A main result of this note is the following
\begin{theorem}\label{the main}
Let $B$ be any linear continuous operator acting in $\X$ such that $\sigma (B) \cap \Gamma \subset \{ 1\}$, and let $x:=(x(n))_{n=1}^\infty$ be a bounded solution of (\ref{eq}). Then,
\begin{equation}\label{100}
\lim_{n\to\infty} [x(n+1)-x(n)]=0.
\end{equation}
\end{theorem}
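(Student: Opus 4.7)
I will work in the quotient space $\Y = l^\infty(\X)/c_0$, where the inhomogeneity $(y(n))\in c_0$ disappears. The operator $B$ acts coordinate-wise on $l^\infty(\X)$; by continuity it preserves $c_0$, so it descends to a bounded operator $\bar B\in L(\Y)$ commuting with the isometric shift $\bar S$. Equation~\eqref{eq} becomes the single identity $\bar S\bar x = \bar B\bar x$ in $\Y$, and the goal~\eqref{100} is exactly $(\bar S - I)\bar x = 0$.

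The strategy is to cut out a closed subspace on which the shift equals the identity. Let $M$ be the smallest closed subspace of $\Y$ that contains $\bar x$ and is invariant under $\bar S$, $\bar S^{-1}$, and $R(\mu,\bar B)$ for every $\mu\in\rho(B)$. By construction $T := \bar S|_M$ is a surjective isometry, and $M$ is also $\bar B$-invariant. The relation $\bar S\bar x = \bar B\bar x$ propagates to the dense linear span obtained by applying finite products of the generators to $\bar x$, because $\bar S$ commutes with each of $\bar S^{\pm 1}$, $\bar B$, and $R(\mu,\bar B)$; passing to the closure yields $\bar S|_M = \bar B|_M$.

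Next I bound $\sigma(T)$. Since $T$ is an isometric automorphism of $M$, $\sigma(T)\subset\Gamma$. On the other hand, the $M$-invariance of $R(\mu,\bar B)$ for every $\mu\in\rho(B)$ means that $R(\mu,\bar B)|_M$ is a bounded two-sided inverse of $(\mu - \bar B)|_M$ in $L(M)$, so $\sigma(\bar B|_M)\subset\sigma(B)$. Combining this with the identification $T = \bar B|_M$,
\[
\sigma(T) \;=\; \sigma(\bar B|_M) \;\subset\; \sigma(B)\cap\Gamma \;\subset\; \{1\}.
\]

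Finally, Gelfand's theorem asserts that an invertible operator $T$ with $\sup_{n\in\Z}\|T^n\|<\infty$ and $\sigma(T)\subset\{1\}$ must be the identity. Applied to the isometric automorphism $T$ this yields $T = I_M$, so $\bar S\bar x = \bar x$, which is~\eqref{100}. The main obstacle is the construction in the second paragraph: one must choose $M$ large enough that $R(\mu,\bar B)$ genuinely inverts $\mu - \bar B$ on $M$ (to secure $\sigma(\bar B|_M)\subset\sigma(B)$), yet small enough that the identity $\bar S = \bar B$, a priori forced only at $\bar x$, survives the closure via commutation. Enlarging the obvious orbit $\overline{\mathrm{span}}\{\bar S^n\bar x : n\in\Z\}$ to absorb all $R(\mu,\bar B)\bar x$ is what reconciles both demands and circumvents the potential failure of $\Gamma\setminus\{1\}$ to lie in the unbounded component of $\rho(B)$.
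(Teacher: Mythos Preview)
Your argument is correct but follows a genuinely different route from the paper's. The paper deliberately avoids Gelfand's theorem (it even flags this in a footnote): instead it shows directly, via a contour-integral estimate with the weight $1+z^2/r^2$ (Lemma~\ref{lem 2}), that every isolated singularity of $\lambda\mapsto R(\lambda,\bar S)\bar x$ on $\Gamma$ is at most a simple pole; it localizes those singularities in $\sigma(B)\cap\Gamma$ (Lemma~\ref{lem 1}); and then applies Liouville's theorem to the scalar entire function $h(\lambda)=(\lambda-1)R(\lambda,\bar S)(\bar S\bar x-\bar x)$ to conclude $\bar S\bar x=\bar x$. Your proof is shorter once Gelfand is granted, and your device of adjoining all resolvents $R(\mu,\bar B)$, $\mu\in\rho(B)$, to the orbit subspace is a clean way to secure $\sigma(\bar B|_M)\subset\sigma(B)$ without assuming $\Gamma\setminus\{1\}$ lies in the unbounded component of $\rho(B)$. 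The paper's approach buys self-containment and isolates the pole-order lemma as a reusable ingredient for the later Tauberian result on individual stability (Theorem~\ref{the tec}). One small remark on your write-up: the propagation of $\bar S\bar x=\bar B\bar x$ to all of $M$ requires that \emph{both} $\bar S$ and $\bar B$ commute with every generator of $M$, not just $\bar S$; this is automatic, but your sentence as phrased cites only the commutation with $\bar S$.
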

 The theorem is an immediate consequence of several lemmas that may be of independent
 interest.


\begin{lemma}\label{lem 2}
 Assume that $\bar x$ is any point in $\Y$, and the complex function $g(\lambda
):= R(\lambda ,\bar S )\bar x$ has the point $\lambda = \xi_0\in \Gamma$ as an isolated singular point. Then, $\xi_0$ is either a removable singular point or a pole of first order.
\end{lemma}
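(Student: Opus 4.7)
The plan is to work in local coordinates $w=\lambda-\xi_0$ and show that the Laurent coefficients $c_{-m}$ of $f(w) := R(\xi_0+w,\bar S)\bar x$ at $w=0$ vanish for every $m\ge 2$. From the hypothesis, $f$ is analytic on a punctured disk $D^* = \{0<|w|<\delta\}$; the resolvent estimate (\ref{112}) gives $\|f(w)\| \le \|\bar x\|/\bigl||\xi_0 + w| - 1\bigr|$ for $|\xi_0 + w|\ne 1$. On the circle $|w|=r$ this bound blows up at the two points where the circle meets $\Gamma$, so it does not directly control $M(r) := \sup_{|w|=r}\|f(w)\|$.

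The crux of the argument is to upgrade this pointwise estimate to the uniform bound $M(r) \le C\|\bar x\|/r$. For this I use subharmonicity: since $f$ is Banach-valued holomorphic, $\log\|f\|$ is subharmonic. For any $w_0$ with $|w_0|=r$ (small), the closed disk $\overline{D(w_0,r/2)}$ lies in $D^*$, so the mean-value inequality reads
\[
\log\|f(w_0)\| \le \frac{1}{2\pi}\int_0^{2\pi}\log\|f(w_0+(r/2)e^{i\phi})\|\,d\phi.
\]
Substituting the resolvent estimate into the integrand reduces the right-hand side to an integral of the form $\int_0^{2\pi}\log\bigl|A_0 + K\cos(\phi-\phi_*)\bigr|\,d\phi$ (up to an additive constant and a bounded factor), where $K$ is of order $r$. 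The classical lower bound $\int_0^{2\pi}\log|a+b\cos\phi|\,d\phi \ge 2\pi\log(|b|/2)$ (a direct consequence of Jensen's formula applied to $(b/2)z^2+az+(b/2)$) yields $\log\|f(w_0)\| \le \log(C\|\bar x\|/r)$, hence the desired uniform bound $M(r)\le C\|\bar x\|/r$.

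With the uniform circle-bound in hand, the conclusion is routine: for $m\ge 2$,
\[
\|c_{-m}\| = \left\|\frac{1}{2\pi i}\oint_{|w|=r} f(w)\,w^{m-1}\,dw\right\| \le r\cdot r^{m-1}\cdot M(r) \le C\|\bar x\|\,r^{m-1},
\]
which tends to $0$ as $r\to 0$, so $c_{-m}=0$. The principal part of $f$ at $w=0$ consists of at most the term $c_{-1}/w$; therefore $\xi_0$ is either a removable singular point (when $c_{-1}=0$) or a pole of first order for $g$. The main obstacle is the uniform-bound step: the raw pointwise estimate fails at the two $\Gamma$-crossings on the integration circle, and it is precisely the subharmonic mean-value argument together with the Jensen-type lower bound that extracts the sharp growth $M(r)=O(1/r)$—exactly the rate needed to annihilate every Laurent coefficient of order $\le -2$.
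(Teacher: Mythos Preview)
Your argument is correct, but it takes a genuinely different route from the paper's. The paper first makes the exponential substitution $\lambda = e^{z}$, which turns the resolvent bound $\|g(\lambda)\|\le 1/\bigl||\lambda|-1\bigr|$ into $\|f(z)\|\le 2\|\bar x\|/|\Re z|$. It then integrates $z^{n}(1+z^{2}/r^{2})f(z)$ over $|z|=r$; the point of the multiplier is the elementary identity $|1+z^{2}/r^{2}|=2r^{-1}|\Re z|$ on $|z|=r$, so the blow-up at $\Re z=0$ is cancelled \emph{pointwise} in the integrand. This gives directly $\|a_{-(n+1)}+r^{-2}a_{-(n+3)}\|\le 4r^{n}$, and letting $r\to 0$ kills every $a_{-k}$ with $k\ge 2$. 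Your approach instead averages out the two bad points via the subharmonic mean-value inequality for $\log\|f\|$ and a Jensen-type lower bound, obtaining the uniform estimate $M(r)\le C\|\bar x\|/r$ and then the standard Cauchy bound on Laurent coefficients. Both reach the same conclusion; the paper's proof is more elementary (only contour integrals and a clever multiplier, no potential theory), while yours isolates the growth rate $M(r)=O(1/r)$ as a standalone fact, which is conceptually clean and portable to other settings. One point you should make explicit: the reduction ``up to an additive constant and a bounded factor'' to $|A_{0}+K\cos(\phi-\phi_{*})|$ with $K\asymp r$ follows from writing $|\xi_{0}+w|-1=(2\Re(\bar\xi_{0}w)+|w|^{2})/(|\xi_{0}+w|+1)$, expanding $w=w_{0}+(r/2)e^{i\phi}$, and noting that the two cosine terms combine to one with amplitude $\ge r-r^{2}\ge r/2$.
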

\begin{proof} Without loss of generality we may assume that $\xi_0=1$. Consider $\lambda $ in a
small neighborhood of $1$ in the complex plane. We will express
$\lambda =e^{z}$ with $|z|< \delta_0 $. Choose a small $\delta_0
>0$ such that if $|z|<\delta_0   $, then
\begin{equation}
\frac{1}{|1-|\lambda ||} \le \frac{2}{|\Re z|}.
\end{equation}
Notice that if $0<|z |<\delta _0$, then
\begin{equation}
\| R(\lambda ,\bar S)\bar x \| \le \frac{1}{|1-|\lambda ||} \| \bar
x\| \le \frac{2}{|\Re z|} \| \bar x\| .
\end{equation}
Set $f(z )= R(e^z ,\bar S)\bar x $ with $|z|<\delta_0$. Since $1$ is
a singular point of $\| R(\lambda ,\bar S)\bar x \|$, $0$ is a
singular point of $f(z)$ in $\{ |z|<\delta_0 \}$ . For each $n\in
\Z$ and $0<r<\delta_0$, we have
\begin{eqnarray*}
&&\| \frac{1}{2\pi i} \int_{|z |=r}  z   ^n \left(
1+\frac{ z  ^2}{r^2}\right) f(z)dz \| \\
&& \hspace{2cm} \le \frac{1}{2\pi} \int_{|z |=r} | z  ^n \left(
1+\frac{ z  ^2}{r^2}\right)|\cdot \| f(z)\|\cdot |dz | .
\end{eqnarray*}
If $z =re^{i\varphi}$, where $\varphi$ is real, then one has
\begin{eqnarray}
| z ^n \left( 1+\frac{ z ^2}{r^2}\right)| &=& r^n |1+e^{2i\varphi} |
 =  r^n | e^{-i\varphi}+e^{i\varphi}|\nonumber\\
 &=& r^n 2|cos \varphi|= 2r^{n-1}|\Re z|.
\end{eqnarray}
Therefore,
\begin{eqnarray}
| \frac{1}{2\pi i} \int_{|z |=r}  z ^n \left(
1+\frac{z^2}{r^2}\right) f(z)dz | &\le& \frac{1}{2\pi}
\int_{|z|=r}2r^{n-1} | \Re z | \frac{2}{|\Re  z|} \cdot |dz |
\nonumber \\
&=& \frac{2\cdot 2r^{n-1}}{2\pi} \int_{|z|=r}  |dz | \nonumber \\
&=&{4r^{n}} .\label{15}
\end{eqnarray}
Consider the Laurent series of $f(z )$ at $z =0$,
\begin{equation}\label{laurent exp}
f(z) = \sum_{n=-\infty}^{\infty} a_n z^n,
\end{equation}
where
\begin{equation}\label{laurent coeff}
a_n =\frac{1}{2\pi i} \int_{| z |=r} \frac{f(z)dz }{z ^{n+1}},\quad
n\in \Z .
\end{equation}
From (\ref{15}) it follows that
\begin{eqnarray*}
\| a_{-(n+1)} + r^{-2} a_{-(n+3)}\| &=& \| \frac{1}{2\pi i}\int_{|z
|=r}  z ^n f(z)dz
 + \frac{1}{2\pi i} \int_{|z |=r} \frac{ z ^{n+2}}{r^2}
f(z)dz \| \nonumber\\
&=&  \| \frac{1}{2\pi i} \int_{|z |=r}  z ^n \left(
1+\frac{ z ^2}{r^2}\right) f(z)dz \| \\
&\le& 4r^{n}.
\end{eqnarray*}
Therefore,
\begin{equation}\label{an}
\| r^2a_{-(n+1)} + a_{-(n+3)}\| \le 4r^{n+2}, \quad n\in \Z .
\end{equation}
 Letting
$r$ tend to $0$ in (\ref{an}), we come up with $a_{-k}=0$ for all $k
\ge 2$. This shows that $z=0$ is a removable singular point (when $a_{-1}=0$) or a pole of first order of $f(z )$. This yields that the complex function  $g(\lambda
):= R(\lambda ,\bar S )\bar x$ has $\lambda =1$ as a removable singular point or a pole of first
order. The lemma is proven.
\end{proof}

Before proceeding we introduce a new notation:  let $0\not= z\in \C$ such that
$z= re^{i\varphi}$ with reals $r=|z|,\varphi$, and let $F(z)$ be any complex
function. Then, (with $s$ larger than $r$) we define
\begin{equation}
\lim_{\lambda \downarrow z} F(\lambda ):= \lim_{s\downarrow r}F(s e^{i\varphi}).
\end{equation}


That is, we consider the limit as $\lambda$ approaches $z$ in a special direction corresponding to the ray $\arg \lambda =\arg z$.
\begin{lemma}\label{lem 3}
Let $\xi_0\in\Gamma$ be an isolated singular point of $g(\lambda )= R(\lambda ,\bar S)\bar x$ with a given $\bar x\in \Y$. Then, this singular point $\xi_0$ is removable provided that
\begin{equation}\label{erg}
\lim_{\lambda \downarrow \xi_0} (\lambda -\xi_0 )R(\lambda ,\bar S)\bar x =0 .
\end{equation}
\end{lemma}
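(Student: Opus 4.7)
The plan is to deduce Lemma \ref{lem 3} as a direct corollary of Lemma \ref{lem 2}. Lemma \ref{lem 2} already establishes that an isolated singular point $\xi_0 \in \Gamma$ of $g(\lambda) = R(\lambda,\bar S)\bar x$ is either removable or a pole of order one, so the only thing to rule out in order to conclude removability is the pole case. The hypothesis (\ref{erg}) will be shown to force the residue at $\xi_0$ to vanish.

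First, by Lemma \ref{lem 2}, I write the Laurent expansion of $g$ at $\xi_0$ in the form
\begin{equation*}
R(\lambda,\bar S)\bar x \;=\; \frac{a_{-1}}{\lambda - \xi_0} + \sum_{n=0}^{\infty} a_n (\lambda - \xi_0)^n,
\end{equation*}
which is valid in a punctured neighborhood of $\xi_0$, with coefficients $a_n \in \Y$ and where $a_{-1}$ is the residue (possibly zero). Multiplying by $(\lambda - \xi_0)$ gives
\begin{equation*}
(\lambda - \xi_0)\,R(\lambda,\bar S)\bar x \;=\; a_{-1} + \sum_{n=0}^{\infty} a_n (\lambda - \xi_0)^{n+1},
\end{equation*}
and the right-hand side, being a convergent power series in $\lambda - \xi_0$ in a full neighborhood of $\xi_0$, is a continuous $\Y$-valued function there taking the value $a_{-1}$ at $\lambda = \xi_0$.

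Next I take the radial limit $\lambda \downarrow \xi_0$ along $\arg \lambda = \arg \xi_0$. Because the power series is continuous (even analytic) at $\xi_0$, the limit in any direction, in particular the radial one defined in the paper, equals $a_{-1}$. The hypothesis (\ref{erg}) therefore gives $a_{-1} = 0$, so the Laurent expansion reduces to an ordinary Taylor series and $\xi_0$ is a removable singularity of $g$.

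The argument is essentially one line once Lemma \ref{lem 2} is available, so there is no real obstacle; the only thing worth checking carefully is that the specialized radial limit in the paper's definition indeed captures the residue, which it does because the factor $(\lambda - \xi_0)g(\lambda)$ is already holomorphic at $\xi_0$ and hence has a genuine limit (not just a directional one) there.
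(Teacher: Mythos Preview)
Your proof is correct and is essentially identical to the paper's own argument: the paper also invokes Lemma \ref{lem 2} to reduce to a possible first-order pole, writes the Laurent expansion $g(\lambda)=\sum_{n\ge 0}b_n(\lambda-\xi_0)^n+\frac{b_{-1}}{\lambda-\xi_0}$, multiplies by $(\lambda-\xi_0)$, and uses the radial limit \eqref{erg} to conclude $b_{-1}=0$. Your extra remark that $(\lambda-\xi_0)g(\lambda)$ is in fact holomorphic at $\xi_0$, so the radial limit coincides with the unrestricted limit, is a helpful clarification but not a departure from the paper's method.
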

\begin{proof}
As shown in Lemma \ref{lem 2}, $\xi$ is either a removable singular point or pole of first order. Without loss of generality we may assume that $\xi_0=1$ for the reader's convenience. Then, the Laurent series of $g(\lambda)$ is of the form
\begin{equation}
g(\lambda ) = \sum_{n=0}^\infty (\lambda -1)^n b_n + \frac{1}{\lambda -1} b_{-1}.
\end{equation}
We need to show that under condition (\ref{erg}) the coefficient $b_{-1}=0$. In fact,
\begin{eqnarray*}
0=\lim_{\lambda \downarrow 1}g(\lambda ) &=& \lim_{\lambda
\downarrow 1}(\lambda -1 )R(\lambda
,\bar S)\bar x  \\
&=&\lim_{\lambda \downarrow 1} (\lambda -1 )\left( \sum_{n=0}^\infty
(\lambda
-1)^n b_n + \frac{1}{\lambda -1} b_{-1} \right)\\
&=&\lim_{\lambda \downarrow 1}  \left( \sum_{n=0}^\infty (\lambda
-1)^{n+1} b_n + \frac{\lambda -1}{\lambda -1} b_{-1} \right)\\
&=& b_{-1}.
\end{eqnarray*}
This shows $\xi_0$ is removable. The lemma is proven.
\end{proof}


\begin{definition}
Let $(x(n))$ be a bounded sequence in $\X$.
The notation $\sigma (x)$ stands
for the set of all non-removable singular points of the complex
function $g(\lambda ):= R(\lambda ,\bar S)\bar x$. This set may be
referred to as {\it the spectrum of $x$}, an analog of a similar concept in \cite{arebat}. Obviously, $\sigma (x
)$ is a closed subset of $\Gamma$.
\end{definition}

\begin{lemma}\label{lem 1} Let $x:=(x(n))$ be a bounded solution of
equation (\ref{eq}). Then,
\begin{equation}
\sigma (x) \subset \sigma (B) \cap \Gamma .
\end{equation}
\end{lemma}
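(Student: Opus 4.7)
The plan is to lift the operator $B$ to the sequence space, push it down to the quotient $\Y$, and then identify $R(\lambda,\bar S)\bar x$ with the resolvent of the lifted $B$ wherever both are defined. First, let $\tilde B$ denote the pointwise action of $B$ on $l^\infty(\X)$, i.e.\ $(\tilde Bu)(n)=Bu(n)$. Since $B$ is continuous, $\tilde B$ leaves $c_0$ invariant, so it induces a bounded operator $\bar B$ on $\Y$. For each $\lambda\in\rho(B)$ the pointwise operator $u\mapsto(R(\lambda,B)u(n))_n$ is a bounded inverse of $\lambda-\tilde B$ on $l^\infty(\X)$ which descends to $\Y$; hence $\sigma(\bar B)\subset\sigma(B)$.

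Next I would exploit two algebraic facts. Since $B$ is applied coordinatewise while $S$ is a pure shift, $\tilde B$ and $S$ commute on $l^\infty(\X)$, so $\bar B$ and $\bar S$ commute on $\Y$; consequently $R(\lambda,\bar B)$ and $R(\mu,\bar S)$ commute whenever both exist. Moreover, from $x(n+1)-Bx(n)=y(n)\in c_0$ one gets $\bar S\bar x=\bar B\bar x$ in $\Y$, and therefore
\begin{equation}
(\lambda-\bar S)\bar x=(\lambda-\bar B)\bar x\qquad\text{for every }\lambda\in\C.
\end{equation}

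Now, for any $\lambda\in\rho(\bar S)\cap\rho(\bar B)$, applying $R(\lambda,\bar B)$ to this identity and using that $R(\lambda,\bar B)$ commutes with $\bar S$ yields $(\lambda-\bar S)R(\lambda,\bar B)\bar x=\bar x$; applying $R(\lambda,\bar S)$ then gives $R(\lambda,\bar S)\bar x=R(\lambda,\bar B)\bar x$. Hence the $\Y$-valued function $g(\lambda)=R(\lambda,\bar S)\bar x$, initially holomorphic on $\rho(\bar S)\supset\C\setminus\Gamma$, agrees on the overlap with $R(\lambda,\bar B)\bar x$, which is holomorphic on the larger open set $\rho(\bar B)\supset\C\setminus\sigma(B)$. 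By analytic continuation, $g$ extends holomorphically across every point of $\Gamma\setminus\sigma(B)$, so no such point can lie in $\sigma(x)$. Since $\sigma(x)\subset\Gamma$ follows automatically from $\sigma(\bar S)\subset\Gamma$, we conclude $\sigma(x)\subset\sigma(B)\cap\Gamma$.

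The only step with any substance is the spectral inclusion $\sigma(\bar B)\subset\sigma(B)$ and the verification that the two resolvents agree on $\bar x$; once the commutation of $\bar B$ and $\bar S$ is in place, both are short, and I do not anticipate a genuine obstacle. The essence of the argument is that the hypothesis $y\in c_0$ precisely erases the distinction between $\bar S$ and $\bar B$ on the orbit $\bar x$, which permits the spectrum of $\bar S$ restricted to $\bar x$ to be controlled by the spectrum of $B$.
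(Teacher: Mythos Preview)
Your proof is correct and follows essentially the same route as the paper's: lift $B$ to $\bar B$ on $\Y$, use $\bar y=0$ to get $\bar S\bar x=\bar B\bar x$, exploit the commutation of $\bar B$ and $\bar S$ to derive $R(\lambda,\bar S)\bar x=R(\lambda,\bar B)\bar x$ on the common resolvent set, and then use $\sigma(\bar B)\subset\sigma(B)$ to extend $g$ holomorphically across $\Gamma\setminus\sigma(B)$. The only cosmetic difference is that the paper first obtains $(\lambda-\bar B)R(\lambda,\bar S)\bar x=\bar x$ and then inverts $\lambda-\bar B$, whereas you first invert $\lambda-\bar B$ and then $\lambda-\bar S$; the substance is identical.
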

\begin{proof}
Consider $R(\lambda ,\bar S )\bar x$ for all $|\lambda |\not= 1$.
Since $x$ is a bounded solution of (\ref{eq}) and $\bar y=0$ we have
\begin{eqnarray}
 R(\lambda ,\bar S )\bar S \bar x&=&  R(\lambda ,\bar S )\bar B \bar x + R(\lambda ,\bar S ) \bar y \nonumber \\
&=& \bar B R(\lambda ,\bar S )\bar x \label{eq2}
\end{eqnarray}
On the other hand, the identity $ \lambda R(\lambda ,\bar S ) \bar x
-\bar x  =  R(\lambda ,\bar S )\bar S \bar x$ gives
\begin{equation}
\lambda R(\lambda ,\bar S ) \bar x -\bar x =  R(\lambda ,\bar S
)\bar S \bar x    =\bar B R(\lambda ,\bar S )\bar x,
\end{equation}
so,
\begin{eqnarray*}
\bar x &=& \lambda R(\lambda ,\bar S ) \bar x - \bar B R(\lambda ,\bar S )\bar x \\
&=& (\lambda   - \bar B ) R(\lambda ,\bar S )\bar x .\\
\end{eqnarray*}
Obviously, $R(\lambda ,\bar S )\bar x$ is analytic on $\C \backslash
\Gamma$. Moreover, if $|\xi_1 |=1$ and $\xi_1\not\in \sigma (B)\cap
\Gamma$, (as we can easily check that $\sigma (\bar B )=\sigma
(B)$), in a small neighborhood $U(\xi_1 )$ of $\xi_1$ we have
\begin{eqnarray}\label{eq3}
R(\lambda ,\bar S )\bar x &=& (\lambda   - \bar B )^{-1} \bar x
,\quad \lambda \in U(\xi_1)\backslash \Gamma .
\end{eqnarray}
This shows that $g(\lambda ) = R(\lambda ,\bar S )\bar x $ is
analytically extendable to a neighborhood of $\xi_1$, that is,
$\xi_1 \not\in \sigma (x)$. The lemma is proven.
\end{proof}

\bigskip\noindent {\it Proof of Theorem \ref{the main}}:
 The identity $R(\lambda ,\bar S)\bar
S\bar x = \lambda R(\lambda , \bar S)\bar x - \bar x$ gives
\begin{eqnarray*}
 R(\lambda ,\bar S)(\bar S \bar x -\bar x ) &=& R(\lambda ,\bar S)\bar S \bar x-R(\lambda ,\bar S) \bar x \\
 &=& \lambda R(\lambda , \bar S)\bar x - \bar x -R(\lambda ,\bar S) \bar x    \\
 &=& (\lambda -1 )R(\lambda ,\bar S)  \bar x -\bar x .
\end{eqnarray*}
Therefore,
\begin{eqnarray}\label{eq8}
h(\lambda ):= (\lambda -1) R(\lambda ,\bar S)(\bar S \bar x -\bar x
) &=& (\lambda -1 )^2 R(\lambda ,\bar S)  \bar x -(\lambda
-1)\bar x .
\end{eqnarray}
By Lemmas \ref{lem 1}, \ref{lem 2}, $\sigma (Sx-x) \subset \sigma
(B)\cap \Gamma \subset \{ 1\}$, $h(\lambda )$ is extendable
analytically to the whole complex plane with only possible exception
at $1$. Since $g(\lambda ): = R(\lambda ,\bar S) \bar x$ has $1$ as
a either removable singular point or a pole of first order we have
\begin{eqnarray*}
 \lim_{\lambda \to 1} (\lambda -1)^2 R(\lambda ,\bar S) \bar x   &=&  0.
\end{eqnarray*}
Consequently,
\begin{eqnarray*}
 \lim_{\lambda \to 1} (\lambda -1) R(\lambda ,\bar S)(\bar S \bar x -\bar x ) &=&   \lim_{\lambda \to 1} [ (\lambda -1 )^2 R(\lambda ,\bar S) \bar x -(\lambda -1)\bar x
 ]\\
 &=&    \lim_{\lambda \to 1}   (\lambda -1 )^2 R(\lambda ,\bar S) \bar x -\lim_{\lambda \to 1} (\lambda -1)\bar x \\
 &=& 0.
\end{eqnarray*}
By Lemma \ref{lem 3}, $h(\lambda)$ has $\lambda =1$ as a removable
singular point, so $h(\lambda )$ is extendable to an entire
function. For $|\lambda |>1$, by (\ref{112})
we have
\begin{eqnarray*}
\limsup_{|\lambda|\to\infty} \| h(\lambda )\| &=&  \limsup_{|\lambda|\to\infty} \| (\lambda -1) R(\lambda ,\bar S)(\bar S \bar x -\bar x )\| \nonumber\\
 &\le& \limsup_{|\lambda|\to\infty}  \frac{|\lambda |+1}{  |\lambda |-1 } \cdot \| \bar S \bar x -\bar x \|  \nonumber\\
 &=& \| \bar S \bar x -\bar x \| .\label{eq9}
\end{eqnarray*}
This shows that $h(\lambda )$ is bounded on the complex plane, so,
as a bounded entire function it should be a constant by Liouville's
Theorem. In turn, it is identically equal to zero because $h(1
):=\lim_{\lambda \to 1}h(\lambda )=0$. Since $R(\lambda ,\bar S)$ is
injective for each $\lambda \not= 1$, we have $\bar S\bar x-\bar
x=0$. Therefore, $(Sx-x)\in c_0$, that is, (\ref{100}). The theorem
is proven.
\begin{remark}
In the remark following Theorem \ref{the tec} we will give an alternative proof of Theorem \ref{the main} in a more general context. However, the above proof seems to be more elementary.
\end{remark}

\subsection{Stability of Individual Orbits}

 We define ${\cal M}_{\bar x}$ as the smallest closed subspace
of $\Y := l^\infty (\X)/c_0$ spanned by $\{ \bar S^n\bar x, n\in
\Z\}$. Consider the restriction $\bar S |_{{\cal M}_{\bar x}}$ that
is also a surjective isometry.

\begin{lemma}\label{lem 4} Let $x:=(x(n))\in l^\infty (\X )$. Then, the following assertions hold:
\begin{enumerate}
\item $\sigma (x)=\emptyset$ if and only if $x\in c_0$;
\item If $\sigma (\bar x) \not= \emptyset$, then
  $\sigma (x)=\sigma ( \bar S |_{{\cal M}_{\bar x}})$.
\end{enumerate}
\end{lemma}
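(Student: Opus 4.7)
For part (i), the backward direction is immediate, since $x\in c_0$ gives $\bar x=0$ in $\Y$ and hence $g\equiv 0$. For the forward direction, assume $\sigma(x)=\emptyset$: then $g(\lambda)=R(\lambda,\bar S)\bar x$ extends to an entire $\Y$-valued function $\tilde g$. The resolvent bound (\ref{112}) gives $\|g(\lambda)\|\le \|\bar x\|/||\lambda|-1|$ for $|\lambda|\ne 1$, so $\tilde g$ is bounded on $\{|\lambda|\ge 2\}$ and vanishes at infinity; the maximum modulus principle on $\{|\lambda|\le 2\}$ propagates the bound from $|\lambda|=2$ to the whole disk. Hence $\tilde g$ is a bounded entire function with $\tilde g(\lambda)\to 0$ as $|\lambda|\to\infty$, so Liouville's theorem forces $\tilde g\equiv 0$. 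Since $\bar x=(\lambda-\bar S)\tilde g(\lambda)$ for any $|\lambda|\ne 1$, this yields $\bar x=0$, i.e., $x\in c_0$.

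For part (ii), write $T:=\bar S|_{{\cal M}_{\bar x}}$, a surjective isometry on ${\cal M}_{\bar x}$ with $\sigma(T)\subseteq\Gamma$. The inclusion $\sigma(x)\subseteq\sigma(T)$ is direct: for any $\lambda_0\in\Gamma\setminus\sigma(T)$ the resolvent $R(\lambda,T)$ is analytic in a neighborhood of $\lambda_0$, and whenever $|\lambda|\ne 1$ in that neighborhood, the vectors $R(\lambda,T)\bar x$ and $R(\lambda,\bar S)\bar x$ coincide, because each is the unique element of ${\cal M}_{\bar x}$ solving $(\lambda-\bar S)y=\bar x$. Hence $R(\cdot,T)\bar x$ furnishes an analytic extension of $g$ through $\lambda_0$, so $\lambda_0\notin\sigma(x)$.

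For the reverse inclusion $\sigma(T)\subseteq\sigma(x)$, I argue contrapositively. Given $\lambda_0\notin\sigma(x)$, let $\tilde g:U\to\Y$ be the analytic extension of $g$ on a neighborhood $U$ of $\lambda_0$. Closedness of ${\cal M}_{\bar x}$ forces $\tilde g(U)\subseteq{\cal M}_{\bar x}$. Since $R(\lambda,T)$ commutes with $T^{\pm 1}$, for every Laurent polynomial $P$ the map $\lambda\mapsto P(T,T^{-1})\tilde g(\lambda)$ extends $R(\lambda,T)P(T,T^{-1})\bar x$ analytically through $\lambda_0$; this defines a linear operator $\tilde R(\lambda_0)$ on the dense subspace $E:=\operatorname{span}\{T^n\bar x:n\in\Z\}$ of ${\cal M}_{\bar x}$ satisfying $(\lambda_0-T)\tilde R(\lambda_0)y=\tilde R(\lambda_0)(\lambda_0-T)y=y$ for every $y\in E$. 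The main obstacle is proving that $\tilde R(\lambda_0)$ is bounded on $E$, which would allow it to extend by density to a bounded two-sided inverse of $\lambda_0-T$ on ${\cal M}_{\bar x}$ and yield $\lambda_0\in\rho(T)$. For this one combines the analyticity of $\lambda\mapsto\tilde R(\lambda)y$ on $U$ (for each $y\in E$) with a Cauchy-type estimate on a suitable contour in $U$, using the single-valued extension property of the isometry $T$ (which follows from $\sigma(T)\subseteq\Gamma$, precluding non-trivial analytic branches of eigenvectors) to guarantee that the local extensions obtained on $E$ are mutually compatible. Once the boundedness is established, the identity $(\lambda_0-T)\tilde R(\lambda_0)=I$ extends by continuity from $E$ to ${\cal M}_{\bar x}$, giving $\lambda_0\in\rho(T)$ as required.
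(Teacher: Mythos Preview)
Part (i) is correct and is essentially the paper's argument: you apply Liouville directly to the entire extension of $g$, while the paper applies it to $(\lambda-1)g(\lambda)$, but the two are equivalent. The inclusion $\sigma(x)\subseteq\sigma(T)$ in part (ii) is also fine and matches the paper.

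The gap is in the reverse inclusion $\sigma(T)\subseteq\sigma(x)$. You explicitly identify the ``main obstacle'' --- boundedness of $\tilde R(\lambda_0)$ on $E$ --- and then do not actually overcome it: the appeal to a ``Cauchy-type estimate on a suitable contour'' together with the single-valued extension property of $T$ is too vague to yield a bound $\|\tilde R(\lambda_0)y\|\le C\|y\|$. Concretely, for $y=P(T,T^{-1})\bar x$ your candidate inverse is $\tilde R(\lambda_0)y=P(\bar S,\bar S^{-1})\tilde g(\lambda_0)$, and a Cauchy estimate controls this only by $\|P(\bar S,\bar S^{-1})\|\cdot\sup_\gamma\|\tilde g\|$, not by $\|y\|$; the SVEP concerns uniqueness of local resolvents, not their size. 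The paper sidesteps the difficulty entirely by proving that $\xi_0-T$ is a \emph{bijection} of ${\cal M}_{\bar x}$ and then invoking the bounded-inverse theorem. For surjectivity it shows that for each $w\in{\cal M}_{\bar x}$ the map $\lambda\mapsto R(\lambda,\bar S)w$ inherits the analytic extension of $g$ near $\xi_0$, and takes $v$ to be its value there; for injectivity it observes that a nonzero eigenvector $v_0$ of $T$ at $\xi_0$ would force $R(\lambda,\bar S)v_0=(\lambda-\xi_0)^{-1}v_0$, which cannot extend analytically across $\xi_0$. Once bijectivity is established, boundedness of the inverse is automatic, so the step you could not complete is simply unnecessary.
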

\begin{proof}
(i): If $\sigma (x)=\emptyset$, the function $g(\lambda ):=R(\lambda ,\bar S)\bar x$ can be extended to an entire function. Using exactly the argument in the proof of Theorem \ref{the main} we come up with the boundedness of the complex function $t(\lambda ):= (\lambda -1) R(\lambda ,\bar S)\bar x$, so by Liouville's Theorem $t(\lambda )$ is a constant. And thus, $t(\lambda )=\lim_{\lambda\to 1} (\lambda -1)g(\lambda )= 0$. The injectiveness of $R(\lambda ,\bar S)$ for each $|\lambda|\not =1$ yields that $\bar x=0$. The converse is clear.

\medskip\noindent
(ii): By (i), $\bar x\not= 0$, so $\rho (\bar S |_{{\cal M}_{\bar x}})\not= \emptyset$.
Let $\xi_0\in \rho (\bar S |_{{\cal M}_{\bar x}})$. Then, since for
$|\lambda |\not= 1$
$$
R(\lambda ,\bar S)\bar x = R(\lambda ,\bar S |_{{\cal M}_{\bar
x}})\bar x
$$
it is clear that $\xi_0$ is a regular point of $g(\lambda )$.

\medskip
Conversely, let $\xi_0$ be a regular point of $g(\lambda)$. Without
loss of generality we may assume $|\xi_0 |=1$, otherwise it is
already in $\rho (\bar S |_{{\cal M}_{\bar x}})$. We will show that
$\xi_0\in \rho (\bar S |_{{\cal M}_{\bar x}})$ by proving that the
equation
\begin{equation}\label{20}
\xi_0 v-\bar S v = w
\end{equation}
has a unique solution $v\in {\cal M}_{\bar x}$ for each given $w\in
{\cal M}_{\bar x}$. First, we show that there is at least one solution.
In fact, we note that for each $n\in\Z$ the set of regular points of
$g(\lambda )=R(\lambda ,\bar S)\bar x$ is the same as that of $\bar
S^n g(\lambda )=R(\lambda ,\bar S)\bar S^n\bar x$. And in turn, by
the property of holomorphic functions, the set of all regular points
of $g(\lambda )=R(\lambda ,\bar S)\bar x$ must be part of that of
the function $k(\lambda )=R(\lambda ,\bar S)w$, so $k(\lambda
)=R(\lambda ,\bar S)w$ is analytically extendable to a neighborhood
of $\xi_0$. In particular, $\lim_{\lambda \to\xi_0} k(\lambda )=v\in
{\cal M}_{\bar x}$, so
\begin{eqnarray*}
\lim_{(|\lambda|>1), \ \lambda \to \xi_0 }[\lambda R(\lambda ,\bar S)w - R(\lambda ,\bar S)\bar S w] &=& w\\
\xi_0v -\bar S v&=&w.
\end{eqnarray*}
To show that equation (\ref{20}) has a unique solution in ${\cal
M}_{\bar x}$ we can show that the homogeneous equation $\xi_0 v-\bar
Sv=0$ has only a trivial solution in ${\cal M}_{\bar x}$. In fact, let
$v_0\in {\cal M}_{\bar x}$ be a solution of this equation. Then, for
each $|\lambda|>1$, using the identity $R(1
,A)=(I-A)^{-1}=\sum_{n=0}^\infty A^n$ for each $\|A\| <1$ and $\bar
S^n v_0=\xi_0^n v_0$ we have
\begin{eqnarray}
R(\lambda,\bar S) v_0 &=& \sum_{n=0}^\infty \frac{1}{\lambda^{n+1}} \bar S ^n v_0\nonumber\\
&=& \sum_{n=0}^\infty \frac{1}{\lambda^{n+1}} \xi_0 ^n v_0\nonumber\\
&=& \frac{1}{\lambda-\xi_0}v_0.\label{30}
\end{eqnarray}
Since $v_0\in {\cal M}_{\bar x}$, this function must, as above, be extendable analytically to a neighborhood of $\xi_0$, and this is possible only if $v_0=0$. Summing up, we have that $\xi_0\in\rho ({\cal M}_{\bar
x})$, so the lemma is proven.
\end{proof}


\begin{theorem}\label{the tec}
Let $(x(n))$ be a bounded sequence such that the set $\sigma ( x)$ of
all non-removable singular points of $g(\lambda )=R(\lambda ,\bar
S)\bar x$ is countable, and let the following condition holds for
each $\xi_0\in \sigma (\bar x)$
\begin{equation}
\lim_{\lambda \downarrow \xi_0} (\lambda -\xi_0 )R(\lambda ,\bar
S)\bar x =0 .
\end{equation}
Then, \begin{equation}\label{55} \lim_{n\to\infty}x(n)=0.
\end{equation}
\end{theorem}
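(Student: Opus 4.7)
The plan is to argue by contradiction: assuming $\bar x \neq 0$ in $\Y$, I will produce an isolated point of $\sigma(x)$ at which Lemma \ref{lem 3} forces the singularity to be removable, contradicting the definition of $\sigma(x)$. Note that by Lemma \ref{lem 4}(i), the desired conclusion (\ref{55}) is equivalent to $\sigma(x) = \emptyset$, so it suffices to show that under the standing hypotheses $\sigma(x)$ cannot be non-empty.

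Suppose for contradiction that $\sigma(x) \neq \emptyset$. By definition, $\sigma(x)$ is closed in $\Gamma$, hence a non-empty compact metric space; by hypothesis it is also countable. The Baire category theorem then guarantees an isolated point $\xi_0 \in \sigma(x)$, for otherwise $\sigma(x)$ would be a non-empty complete metric space with no isolated points, which is necessarily uncountable. I claim $\xi_0$ is an isolated singular point of $g(\lambda) := R(\lambda, \bar S)\bar x$ in the classical sense. Indeed, $g$ is holomorphic on $\C \setminus \Gamma$, and in a sufficiently small neighborhood $U$ of $\xi_0$ every point of $U \cap \Gamma$ different from $\xi_0$ lies outside $\sigma(x)$; by the very definition of $\sigma(x)$ each such point is a removable singularity of $g$, so after removing them $g$ extends analytically to $U \setminus \{\xi_0\}$.

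Apply the hypothesis at this point: $\lim_{\lambda \downarrow \xi_0}(\lambda - \xi_0) R(\lambda, \bar S)\bar x = 0$. Lemma \ref{lem 3} now applies to $\xi_0$ and yields that $\xi_0$ is in fact a removable singular point of $g$, contradicting $\xi_0 \in \sigma(x)$. Consequently $\sigma(x) = \emptyset$, and Lemma \ref{lem 4}(i) gives $\bar x = 0$, i.e.\ $(x(n)) \in c_0$.

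The only delicate point I see is the step from \emph{isolated in} $\sigma(x) \subset \Gamma$ to \emph{isolated singular point of the resolvent function} $g(\lambda)$ in the sense required by Lemma \ref{lem 3}: it relies on the observation that the removable singularities of $g$ on $\Gamma$ have been discarded in forming $\sigma(x)$, so that the surrounding circle points are harmless after analytic continuation. Once this small bookkeeping is in place, Baire plus Lemma \ref{lem 3} close the argument immediately.
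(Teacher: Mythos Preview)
Your argument is correct and in fact more direct than the route taken in the paper. Both proofs begin the same way: assume $\bar x\neq 0$, use Lemma~\ref{lem 4}(i) to get $\sigma(x)\neq\emptyset$, and then extract an isolated point $\xi_0$ of the countable closed set $\sigma(x)\subset\Gamma$. From there you simply apply Lemma~\ref{lem 3} at $\xi_0$ (after the analytic continuation over the nearby removable points, which you handle correctly) to conclude that $\xi_0$ is removable, contradicting $\xi_0\in\sigma(x)$ outright. The paper instead invokes Lemma~\ref{lem 4}(ii) to identify $\sigma(x)$ with $\sigma(\bar S|_{\mathcal M_{\bar x}})$, observes that $\xi_0$ is then an isolated spectral point and a simple pole of the operator resolvent, produces a nonzero eigenvector $w_0$ via the standard pole--eigenvalue correspondence, and only then derives the contradiction from the explicit formula $R(\lambda,\bar S)w_0=(\lambda-\xi_0)^{-1}w_0$ together with the removability of $\xi_0$ for $g$.

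Your approach buys economy: it needs only Lemma~\ref{lem 3} and Lemma~\ref{lem 4}(i), bypassing Lemma~\ref{lem 4}(ii) and the functional-analytic result on poles of resolvents entirely. The paper's route, while longer, makes the operator-theoretic structure explicit (the local spectrum is the spectrum of a restricted isometry, and the obstruction to decay is a genuine eigenvector), which may be conceptually illuminating even if not logically required here. Your remark about the ``bookkeeping'' step---that isolation in $\sigma(x)$ translates to an isolated singularity of the analytically continued $g$---is exactly the point one must check, and the estimate underlying Lemma~\ref{lem 2} (and hence Lemma~\ref{lem 3}) survives this continuation because it only uses values of $g$ off $\Gamma$.
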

\begin{proof}
We have to show (\ref{55}), that is, $\bar x= 0$, or equivalently,
${\cal M}_{\bar x}$ is trivial. Suppose to the contrary that it is
not. Then, by Lemma \ref{lem 4}, $\sigma (x)= \sigma (\bar S |_{{\cal
M}_{\bar x}})\not= \emptyset$. Since $\sigma (x)$ is a non-empty
closed subset of $\Gamma$ and is countable, it has
an isolated point, say $\xi_0$, so $\xi_0$ is an isolated
singular point for $g(\lambda )$. By Lemma \ref{lem 2} this isolated
singular point must be either a removable singular point or a pole
of first order. Since $\xi_0$ is a pole of first order of the resolvent $R(\lambda , \bar S |_{{\cal
M}_{\bar x}})$, by a well known result in Functional Analysis\footnote{We
actually avoid applying Geldfand Theorem in this case.}  (see e.g.
\cite[Theorem 5.8 A, p. 306]{tay}, or, \cite[Theorem 3, p.
229]{yos}) $\xi_0$ must be an eigenvalue of $\bar S |_{{\cal
M}_{\bar x}}$ with a non-zero eigenvector $w_0$. As in the proof of
Lemma \ref{lem 4}, (see \ref{30})), for each $|\lambda|\not= 1$ we
have
\begin{equation}\label{min2}
R(\lambda ,\bar S)w_0 = \frac{1}{\lambda -\xi_0}w_0.
\end{equation}
On the other hand, by Lemma \ref{lem 3} $\xi_0$ is a removable
singular point for $g(\lambda )$, so is for $R(\lambda ,\bar S)w_0$.
This is possible only if $w_0=0$, contradicting that
$w_0$ is a non-zero vector. This proves the theorem.
\end{proof}

\begin{remark}
An alternative proof of Theorem \ref{the main} is a direct application of Lemma \ref{lem 1} and Theorem \ref{the tec}. As another consequence of Theorem \ref{the tec} we have the
following on the strong asymptotical stability of solutions of
(\ref{eq}).
\end{remark}

\begin{theorem}\label{the main2}
For equation (\ref{eq}) assume that $(y(n))\in c_0$, and the operator $B$ in equation (\ref{eq}) has $\sigma (B)\cap \Gamma$ as a countable set. Then, the following holds for each bounded solution $(x(n))$ of (\ref{eq})
\begin{equation}
\lim_{n\to\infty} x(n)=0,
\end{equation}
provided that for each $\xi_0 \in \sigma (B)\cap \Gamma$ the following condition holds
\begin{equation}\label{min}
\lim_{\lambda  \downarrow \xi_0} (\lambda -\xi_0)R(\lambda , \bar
S)\bar x =0 .
\end{equation}
\end{theorem}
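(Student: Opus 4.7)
My plan is to show that Theorem \ref{the main2} is essentially an immediate corollary of Lemma \ref{lem 1} and Theorem \ref{the tec}; in particular, no fresh spectral or resolvent analysis is needed beyond what has been developed. The strategy is to verify that the bounded solution $(x(n))$ satisfies each hypothesis of Theorem \ref{the tec}, and then to invoke the conclusion of that theorem directly.

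First, I would recall from Lemma \ref{lem 1} that any bounded solution $x=(x(n))$ of the equation (\ref{eq}) satisfies
\begin{equation*}
\sigma(x)\subset \sigma(B)\cap \Gamma.
\end{equation*}
Since the right-hand side is assumed to be a countable subset of $\Gamma$, so is $\sigma(x)$. This takes care of the countability hypothesis in Theorem \ref{the tec}.

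Next, I would check the limit hypothesis of Theorem \ref{the tec}. By definition, every $\xi_0\in \sigma(\bar x)=\sigma(x)$ lies in $\sigma(B)\cap \Gamma$, and for each such $\xi_0$ the hypothesis (\ref{min}) provides exactly
\begin{equation*}
\lim_{\lambda\downarrow \xi_0}(\lambda-\xi_0)R(\lambda,\bar S)\bar x=0.
\end{equation*}
Thus both conditions of Theorem \ref{the tec} are satisfied for $\bar x$.

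Applying Theorem \ref{the tec} then yields $\lim_{n\to\infty} x(n)=0$, which is the claim. The only point requiring care, and the only ``obstacle,'' is to notice that although (\ref{min}) is formulated over the possibly larger set $\sigma(B)\cap \Gamma$, this is harmless because Lemma \ref{lem 1} guarantees the inclusion $\sigma(x)\subset \sigma(B)\cap \Gamma$, so the stronger hypothesis automatically forces the required vanishing on $\sigma(x)$. No further work is needed; this is simply a packaging of the previously established Tauberian theorem with the spectral inclusion coming from the equation.
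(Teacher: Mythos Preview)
Your proposal is correct and matches the paper's own proof exactly: the paper simply states that Theorem \ref{the main2} is an immediate consequence of Lemma \ref{lem 1} and Theorem \ref{the tec}, which is precisely the argument you give.
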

\begin{proof}
This theorem is an immediate consequence of Lemma \ref{lem 1} and Theorem \ref{the tec}.
\end{proof}

\section{Discussion}

Theorem \ref{the main} may be seen as an extension of the following
result due to Katznelson-Tzafriri (see \cite[Theorem 1]{kattza}).
\begin{theorem}
Let $T$ be a power bounded linear operator in a Banach space $\X$ such that $\sigma (T)\cap \Gamma \subset \{ 1\}$. Then,
\begin{equation}\label{kt2}
\lim_{n\to\infty} (T^{n+1}-T^n )=0.
\end{equation}
\end{theorem}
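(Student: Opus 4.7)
The plan is to deduce the Katznelson--Tzafriri Theorem from Theorem \ref{the main} by lifting the situation to the operator algebra $L(\X)$, so that the strong convergence produced by Theorem \ref{the main} becomes convergence in operator norm. Concretely, I would consider the Banach space $L(\X)$ (with operator norm) and the left-multiplication operator $L_T : L(\X) \to L(\X)$, $L_T(A) := TA$. The sequence $x(n) := T^n$ lives in $L(\X)$ and, by power-boundedness of $T$, is bounded in this Banach space. Moreover, $x(n+1) = T \cdot T^n = L_T\, x(n)$, so $x$ is a bounded solution of an equation of the form (\ref{eq}) (in the Banach space $L(\X)$ with operator $B = L_T$) whose inhomogeneity is identically zero, hence trivially in $c_0$.

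Next I would verify the spectral hypothesis $\sigma(L_T) \cap \Gamma \subset \{1\}$. For this it suffices to establish the inclusion $\sigma(L_T) \subseteq \sigma(T)$: if $\lambda \in \rho(T)$, then the equation $\lambda X - TX = A$ in $L(\X)$ has the unique solution $X = (\lambda - T)^{-1} A$, i.e. $(\lambda - L_T)^{-1}$ is left multiplication by $(\lambda - T)^{-1}$, so $\lambda \in \rho(L_T)$. Taking complements gives the desired spectral containment, and intersecting with $\Gamma$ yields $\sigma(L_T) \cap \Gamma \subset \sigma(T) \cap \Gamma \subset \{1\}$.

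With these ingredients in hand, Theorem \ref{the main} applied in the Banach space $L(\X)$ to the bounded solution $x(n) = T^n$ of $x(n+1) = L_T\, x(n)$ yields
\[
\lim_{n \to \infty} \bigl( x(n+1) - x(n) \bigr) = 0 \quad \text{in } L(\X),
\]
which is precisely $\lim_{n\to\infty} \|T^{n+1} - T^n\| = 0$, as required.

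There is no essential obstacle in this argument; the only point that requires care is the passage from the individual (strong-topology) conclusion of Theorem \ref{the main} to the uniform (norm-topology) statement (\ref{kt2}), and this is exactly what the lifting to $L(\X)$ accomplishes. Everything else—the power-boundedness giving boundedness of $(T^n)$ in $L(\X)$, the linearity and continuity of $L_T$, and the spectral inclusion $\sigma(L_T) \subseteq \sigma(T)$—is routine.
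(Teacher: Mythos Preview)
Your argument is correct. The lifting to $L(\X)$ via the left-multiplication operator $L_T$ is sound: power-boundedness gives boundedness of $(T^n)$ in $L(\X)$, the spectral inclusion $\sigma(L_T)\subseteq\sigma(T)$ is exactly as you state, and Theorem~\ref{the main} then yields norm convergence of $T^{n+1}-T^n$.

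The paper does not give an independent proof of this statement; it simply observes (citing \cite{vu2}) that the uniform version is equivalent to the individual version (Theorem~\ref{the vu}), and that the latter is a special case of Theorem~\ref{the main} applied in $\X$ with $x(n)=T^nx_0$. Your route is the same reduction in spirit but carried out explicitly: instead of invoking the equivalence from \cite{vu2} as a black box, you realize it by applying Theorem~\ref{the main} in the larger Banach space $L(\X)$, which converts strong convergence into norm convergence in one stroke. This makes the deduction self-contained within the paper, at the modest cost of checking $\sigma(L_T)\subseteq\sigma(T)$; the paper's version is shorter but relies on an external reference for the passage from individual to uniform.
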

In fact, as noted in \cite{vu2} this theorem is equivalent to a
weaker one
\begin{theorem}\label{the vu}
Let $T$ be a power bounded linear operator in a Banach space $\X$
such that $\sigma (T)\cap \Gamma \subset \{ 1\}$. Then, for each
$x_0\in\X$
\begin{equation}\label{kt}
\lim_{n\to\infty} (T^{n+1}x_0-T^n x_0)=0.
\end{equation}
\end{theorem}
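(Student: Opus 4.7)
The plan is to deduce Theorem \ref{the vu} as an immediate corollary of Theorem \ref{the main}, since all of the real work has already been invested in proving that stronger orbit-wise result. The bridge is to recognize that the orbit $(T^n x_0)_{n \ge 1}$ is itself a bounded solution of a difference equation of the form (\ref{eq}).

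More concretely, fix $x_0 \in \X$ and define the sequence $x(n) := T^n x_0$ for $n \in \N$. First I would check that $x \in l^\infty(\X)$: since $T$ is power-bounded, $\sup_n \|T^n\| =: M < \infty$, so $\|x(n)\| \le M \|x_0\|$ for every $n$. Next I would verify the recurrence: $x(n+1) = T^{n+1} x_0 = T(T^n x_0) = T x(n)$, so $(x(n))$ satisfies equation (\ref{eq}) with $B := T$ and $y(n) := 0$, which trivially lies in $c_0$. The spectral hypothesis $\sigma(B) \cap \Gamma = \sigma(T) \cap \Gamma \subset \{1\}$ is exactly what Theorem \ref{the main} requires.

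Applying Theorem \ref{the main} to this solution yields $\lim_{n\to\infty}(x(n+1) - x(n)) = 0$, which unpacks to $\lim_{n\to\infty}(T^{n+1}x_0 - T^n x_0) = 0$, proving (\ref{kt}). Since this is a one-line reduction, there is no genuine obstacle to overcome here: the entire substance of the argument lives in Theorem \ref{the main} and the sequence of lemmas leading to it (in particular the estimate (\ref{112}) for the resolvent of $\bar S$, the Laurent-coefficient bound of Lemma \ref{lem 2}, and the Liouville step at the end of the proof of Theorem \ref{the main}). The only thing one might pause over is the phrase ``for each $x_0$'': the conclusion is individual, not uniform in $x_0$, which is consistent with Theorem \ref{the main} producing a statement about a single bounded orbit.
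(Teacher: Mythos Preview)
Your proposal is correct and matches the paper's approach exactly: the paper does not give a separate proof of Theorem~\ref{the vu} but simply remarks ``Obviously, our Theorem~\ref{the main} extends Theorem~\ref{the vu},'' which is precisely the reduction you spell out (take $B=T$, $y(n)=0$, $x(n)=T^nx_0$). Your write-up in fact supplies more detail than the paper bothers to record.
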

Obviously, our Theorem \ref{the main} extends Theorem \ref{the vu}.

\medskip
As an immediate consequence of Theorem \ref{the main2} we have the
following corollary:
\begin{corollary}\label{lem 3.3}
Let $B\in L(\X)$ be a power bounded operator such that $\sigma (B)\cap \Gamma$ is a countable set.
Moreover, assume that for each $\xi_0\in \sigma (B)\cap \Gamma$ the following holds for each $x_0\in \X$
\begin{equation}\label{con b}
\lim_{\lambda  \downarrow \xi_0} (\lambda -\xi_0 )R(\lambda ,  B)x_0 =0 .
\end{equation}
Then, for every $x_0\in\X$
\begin{equation}
\lim_{n\to\infty} B^nx_0=0 .
\end{equation}
\end{corollary}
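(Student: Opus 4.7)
The plan is to reduce the corollary to Theorem \ref{the main2} applied to the homogeneous equation with constant coefficient $B$. Set $x(n):=B^{n-1}x_0$ for $n\in\N$. Power-boundedness of $B$ with $M:=\sup_{n\ge 0}\|B^n\|<\infty$ gives $(x(n))\in l^\infty(\X)$, and this sequence trivially satisfies (\ref{eq}) with $y\equiv 0\in c_0$. Since $\sigma(B)\cap\Gamma$ is countable by assumption, the only hypothesis of Theorem \ref{the main2} left to verify is condition (\ref{min}), and its conclusion $\lim_n x(n)=0$ is exactly $\lim_n B^n x_0 = 0$.

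The key step is translating the pointwise hypothesis (\ref{con b}), which lives on $R(\lambda,B)$ acting on $\X$, into the quotient-space condition (\ref{min}), which lives on $R(\lambda,\bar S)$ acting on $\Y$. Fix $\xi_0\in\sigma(B)\cap\Gamma$ and take $\lambda$ on the ray $\arg\lambda=\arg\xi_0$ with $|\lambda|>1$. Since the spectral radius of a power-bounded operator is at most $1$, such $\lambda$ lies in $\rho(B)\cap\rho(\bar S)$. The identity $\bar x=(\lambda-\bar B)R(\lambda,\bar S)\bar x$ derived inside the proof of Lemma \ref{lem 1} then gives $R(\lambda,\bar S)\bar x=R(\lambda,\bar B)\bar x$. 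Because $\bar B$ acts componentwise and $R(\lambda,B)$ commutes with $B$, a representative of $R(\lambda,\bar B)\bar x$ in $l^\infty(\X)$ is the sequence $n\mapsto R(\lambda,B)B^{n-1}x_0 = B^{n-1}R(\lambda,B)x_0$. Taking sup-norms,
\[
\|R(\lambda,\bar S)\bar x\|_{\Y}\le M\,\|R(\lambda,B)x_0\|.
\]
Multiplying by $|\lambda-\xi_0|$ and letting $\lambda\downarrow\xi_0$, hypothesis (\ref{con b}) immediately forces (\ref{min}).

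Once (\ref{min}) is in hand, Theorem \ref{the main2} applies directly and delivers $\lim_{n\to\infty}x(n)=0$, i.e.\ $\lim_{n\to\infty}B^{n}x_0=0$, as required. The only step requiring genuine thought is the resolvent identification $R(\lambda,\bar S)\bar x=R(\lambda,\bar B)\bar x$ for the forward orbit $x$; this is where the commutativity of $B$ with the shift action on this particular sequence is essential. Everything else is straightforward bookkeeping and an invocation of the main theorem.
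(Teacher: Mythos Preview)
Your proof is correct and follows essentially the same route as the paper: define the orbit $x(n)=B^{n}x_0$ (you use the harmless reindexing $B^{n-1}x_0$), identify $R(\lambda,\bar S)\bar x=R(\lambda,\bar B)\bar x$ for $|\lambda|>1$ via the identity from Lemma \ref{lem 1}, then use power-boundedness to bound the quotient norm by $M\,\|R(\lambda,B)x_0\|$ and invoke Theorem \ref{the main2}. The paper's argument and yours differ only in notation and presentation.
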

\begin{proof}
Let $x(n)=B^nx_0$. Then, $(x(n))$ is a bounded solution of
(\ref{eq}) with $(y(n))=0$. Therefore, if $|\lambda |>1$, $\lambda
\in \rho (B)$ and $\lambda \in \rho (\bar S)$, so by (\ref{eq3}) (and the proof of
Lemma \ref{lem 1}),
\begin{eqnarray}
\lim_{\lambda  \downarrow \xi_0}\|   (\lambda -\xi_0 )R(\lambda ,
\bar S)\bar x \| &=& \lim_{\lambda  \downarrow \xi_0}\|   (\lambda
-\xi_0 )R(\lambda ,
\bar B)\bar x \|  \\
&\le &
\lim_{\lambda  \downarrow \xi_0} \sup_{n\in\N}\{ \|   (\lambda -\xi_0 )R(\lambda ,    B) B^n x_0 \| \}\nonumber \\
&\le& \lim_{\lambda  \downarrow \xi_0} \sup_{n\in\N}\{ \| B^n\|\} \cdot \|   (\lambda -\xi_0 )R(\lambda ,    B) x_0 \| \nonumber \\
&\le& \lim_{\lambda  \downarrow \xi_0}     \| (\lambda -\xi_0
)R(\lambda ,    B) x_0 \| =0.
\end{eqnarray}
Therefore, by Theorem \ref{the main2}, $x(n)=B^nx_0 \to 0$.
\end{proof}

\begin{remark}
{\it Condition (\ref{con b}) is satisfied if} $R\sigma (B) \cap \Gamma
=\emptyset$, and hence Corollary \ref{lem 3.3} yields the discrete version of the Arendt-Batty-Ljubich-Vu Theorem \cite[Theorem 5.1]{arebat3},
\cite[Corollary 3.3]{eststrzou2}, \cite{vulyu}).
In fact, since $B$ is power-bounded one can easily show that there exists a positive constant $C$ such that
\begin{equation}\label{n re}
\| R(\lambda ,B) \| \le \frac{C}{||\lambda |-1|}, \quad  \mbox{for} \ |\lambda|>1 .
\end{equation}
Next, since $R\sigma (B) \cap \Gamma
=\emptyset$, for
all $\xi_0 \in \sigma (B)\cap \Gamma$, the range of $(\xi_0 -B) $
is dense in $\X$. Therefore, for each $ x_0 \subset \X$ there is a sequence $(x_0^n)\in Ran (\xi_0-B)$ such that $ x_0=\lim_{n\to\infty}x_0^n$. Then, $x^n_0=(\xi_0-B)y^n_0$ for some
sequence $(y^n_0)\subset \X$.
By our definition of the limit as $\lambda \downarrow \xi_0$
we have $|\lambda -\xi_0| = ||\lambda |-|\xi_0|| = ||\lambda |-1| \to 0$, so in view of (\ref{n re}), for each fixed $n$ we have
\begin{eqnarray}
\lim_{\lambda  \downarrow \xi_0} \|(\lambda -\xi_0 )R(\lambda ,
B)x^n_0 \|&=&
\lim_{\lambda  \downarrow \xi_0} \| (\lambda -\xi_0 )R(\lambda ,  B)(\xi_0 -B)y^n_0 \| \nonumber   \\
&=& \lim_{\lambda  \downarrow \xi_0}\|  (\lambda -\xi_0 )R(\lambda ,
B)[(\lambda -B)y^n_0 +(\xi_0 -\lambda )y^n_0]\| \nonumber\\
&=&  \lim_{\lambda  \downarrow \xi_0}\left(|\lambda -\xi_0| \cdot \| y^n_0\| +   \| (\lambda -\xi_0
)R(\lambda
, B)(\xi_0 -\lambda )y^n_0\| \right) \nonumber\\
&\le& 0+\lim_{\lambda  \downarrow \xi_0} \| \lambda -\xi_0 \|^2 \cdot
\| R(\lambda , B) \|\cdot \| y^n_0\| \nonumber\\
&\le& \lim_{\lambda  \downarrow \xi_0} ||\lambda|-|\xi_0||^2\cdot\frac{C  }{||\lambda |-1|}\cdot \| y^n_0\|\nonumber\\
&=&  \lim_{\lambda  \downarrow \xi_0} \frac{ ||\lambda|-1|^2\cdot C}{||\lambda |-1|}\cdot \| y^n_0\| =0 . \label{rem1}
\end{eqnarray}
By (\ref{n re}) for every fixed $n$
\begin{equation}
\limsup_{\lambda  \downarrow \xi_0} \|(\lambda -\xi_0 )R(\lambda ,
B)  (x_0-x^n_0 )\|  \le C\| x_0-x_0^n\| .\label{rem2}
\end{equation}
Finally, for each $n\in \N$ from (\ref{rem1}) and (\ref{rem2}) we
have
\begin{eqnarray*}
\limsup_{\lambda  \downarrow \xi_0} \|(\lambda -\xi_0 )R(\lambda ,
B)x_0 \| &\le&   \limsup_{\lambda  \downarrow \xi_0} [
\|(\lambda -\xi_0 )R(\lambda , B)x^n_0 \\
&& +(\lambda -\xi_0 )R(\lambda , B)(x_0-x^n_0 ) \| ] \\
& \le&  C\| x_0-x_0^n\| .
\end{eqnarray*}
Since $\| x_0^n-x_0\| \to 0$ as $n\to\infty$, we have that (\ref{con
b}) holds for any $x_0\in\X$.
\end{remark}

\medskip
Let us define a so-called  {\it Condition H}  for a closed subspace ${\cal M}$ of $l^\infty (\X)$ by the following axioms:
\begin{enumerate}
\item ${\cal M}$ is bi-invariant under translation $S$, that is, ${\cal M}=\{ x\in l^\infty (\X): Sx \in {\cal M}\}$;
\item If $x:=(x(n))\in {\cal M}$ and $A\in L(\X)$, then $y:= (Ax(n))\in  {\cal M}$;
\item $c_0\subset  {\cal M}$,
\end{enumerate}
As an example of such a closed subspace ${\cal M}$ of $l^\infty
(\X)$ that satisfies Condition H one can take the space $AAP(\N
,\X)$ of all asymptotic almost periodic sequences. If we replace
$c_0$ by ${\cal M}$, we will arrive at various analogs of
Theorems \ref{the main}, \ref{the main2} and \ref{the tec}. Note
that the proofs of these analogs are identically similar to those of
the mentioned theorems. Below are the statements of analogs of the
mentioned theorems in case ${\cal M}=AAP(\N,\X)$.

\bigskip
Recall that a sequence $(x(n))$ is said to be asymptotically almost
periodic if $x(n)=y(n)+z(n)$ for all $n\in \N$ where $(y(n))\in c_0$
and $(z(n))$ is an almost periodic sequence. An almost periodic
sequence on $\N$ is the restriction to $\N$ of an almost periodic
sequence on $\Z$. In turn, an almost periodic sequence on $\Z$ is
defined to be an element of the following subspace $\overline{span\{
(\lambda ^{n} y_0)_{n\in \mathbb Z},\lambda \in \Gamma ,y_0 \in
\mathbb X\}}$  of $l^\infty (\X)$. In the following, by abusing
notations, $\bar x$ denotes the equivalent class of $l^\infty
(\X)/AAP(\N,\X)$ containing $x$, $\bar S$ denotes the the operator
acting in $l^\infty (\X)/AAP(\N,\X)$ induced by $S$.

\begin{theorem}\label{the main4}
Let $B$ be any linear continuous operator acting in $\X$ such that
$\sigma (B) \cap \Gamma \subset \{ 1\}$, and let
$x:=(x(n))_{n=1}^\infty$ be a bounded solution of (\ref{eq}) in
which $(y(n))\in AAP(\N,\X)$. Then, the sequence $(y(n))$, defined
as $y(n):= x(n+1)-x(n)$ for all $n\in \N$, is asymptotically almost
periodic.
\end{theorem}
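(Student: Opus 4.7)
The plan is to mirror the proof of Theorem \ref{the main} while replacing the target quotient $\Y = l^\infty(\X)/c_0$ by the new quotient $l^\infty(\X)/AAP(\N,\X)$. The hypothesis $(y(n))\in AAP(\N,\X)$ forces $\bar y = 0$ in this new quotient, so equation (\ref{eq}) descends to $\bar S\bar x = \bar B\bar x$ exactly as in the $c_0$ setting. Our aim will then be to deduce $\bar S\bar x = \bar x$ in the new quotient, i.e., $(x(n+1)-x(n))\in AAP(\N,\X)$.

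The first task is to verify that ${\cal M} := AAP(\N,\X)$ satisfies Condition H. Axiom (iii) is immediate, and axiom (ii) follows by applying $A\in L(\X)$ term-by-term to each element of the spanning family $\{(\lambda^n y_0)_{n\in \Z}\}$ for almost periodic sequences on $\Z$. The substantive step is bi-invariance (axiom (i)): if $Sx\in AAP(\N,\X)$, write $x(n+1)=z(n)+c(n)$ with $z$ the restriction to $\N$ of an almost periodic sequence on $\Z$ and $c\in c_0$; re-indexing by one step gives $x(n)=z(n-1)+\tilde c(n)$ for $n\in\N$, where $n\mapsto z(n-1)$ is again the restriction to $\N$ of a translate of an almost periodic sequence on $\Z$, and $\tilde c$ differs from the backward shift of $c$ only at $n=1$, hence still lies in $c_0$. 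With Condition H in hand, $S$ descends to a surjective isometry $\bar S$ on the new quotient by the same argument as in Section \ref{section 1}, so $\sigma(\bar S)\subset\Gamma$ and the resolvent estimate (\ref{112}) carries over verbatim.

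Next I would reprove Lemmas \ref{lem 1}, \ref{lem 2}, and \ref{lem 3} in the new setting. Each of them relies only on $\bar S$ being a surjective isometry, on $\bar B$ commuting with $\bar S$ and with $R(\lambda,\bar S)$, and on the bound (\ref{112}); none appeals to any special feature of $c_0$. Redefining $\sigma(x)$ as the set of non-removable singular points of $g(\lambda):=R(\lambda,\bar S)\bar x$ in the new quotient, we obtain $\sigma(x)\subset\sigma(B)\cap\Gamma\subset\{1\}$, and the point $\lambda=1$ is either removable or a simple pole of $g$.

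Finally I would form $h(\lambda):=(\lambda-1)R(\lambda,\bar S)(\bar S\bar x-\bar x)=(\lambda-1)^2 R(\lambda,\bar S)\bar x-(\lambda-1)\bar x$ exactly as in the proof of Theorem \ref{the main}. The simple-pole structure at $1$ forces $\lim_{\lambda\to 1}h(\lambda)=0$, so by the analog of Lemma \ref{lem 3} the function $h$ extends to an entire function, while the resolvent estimate yields $\limsup_{|\lambda|\to\infty}\|h(\lambda)\|\le \|\bar S\bar x-\bar x\|$, so $h$ is bounded; by Liouville's theorem $h\equiv 0$, and injectivity of $R(\lambda,\bar S)$ gives $\bar S\bar x=\bar x$ in the new quotient, which is the desired conclusion. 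The main obstacle I expect is the careful verification of Condition H for $AAP(\N,\X)$ and, with it, that $\bar S$ on the new quotient is still a surjective isometry satisfying (\ref{112}); once that infrastructure is installed, the analytic portion of the argument transfers without change.
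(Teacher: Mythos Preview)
Your proposal is correct and follows exactly the approach the paper intends: the paper explicitly states that the proofs of Theorems \ref{the main4}, \ref{the tec2}, and \ref{the main7} are ``identically similar'' to those of Theorems \ref{the main}, \ref{the tec}, and \ref{the main2} once $c_0$ is replaced by $AAP(\N,\X)$ and Condition H is verified. Your outline supplies precisely that verification and then transfers the analytic argument verbatim, which is all that is required.
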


\begin{theorem}\label{the tec2}
Let $(x(n))$ be a bounded sequence such that the set
$\sigma_{AAP(\N,\X)} ( x)$ of all non-removable singular points of
$g(\lambda )=R(\lambda ,\bar S)\bar x$ is countable, and let the
following condition hold for each $\xi_0\in \sigma_{AAP(\N,\X)} ( x)$
\begin{equation}
\lim_{\lambda \downarrow \xi_0} (\lambda -\xi_0 )R(\lambda ,\bar
S)\bar x =0 .
\end{equation}
Then, $(x(n))$ is asymptotically almost periodic.
\end{theorem}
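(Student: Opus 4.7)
The strategy is to replay the proof of Theorem \ref{the tec} verbatim, replacing the quotient $\Y = l^\infty(\X)/c_0$ by $\Y' := l^\infty(\X)/AAP(\N,\X)$. The first step is to verify that the shift induced by $S$ on $\Y'$, still denoted $\bar S$, is a surjective isometry; once this is established, the resolvent estimate (\ref{112}) holds for $\bar S$ acting on $\Y'$, and Lemmas \ref{lem 2} and \ref{lem 3} go through unchanged, since their proofs invoke only that bound. The isometry is a consequence of the bi-invariance axiom (i) in Condition H applied to $AAP(\N,\X)$: for any $z \in AAP(\N,\X)$ there is $z' \in AAP(\N,\X)$ with $Sz' = z$, so that $\|Sx+z\|_\infty = \|S(x+z')\|_\infty \le \|x+z'\|_\infty$, giving $\|\overline{Sx}\|_{\Y'} \le \|\bar x\|_{\Y'}$; the reverse inequality follows by a symmetric manipulation, using that shifting indices and adjusting the first coordinate preserves $AAP(\N,\X)$.

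Next I would formulate the direct analog of Lemma \ref{lem 4} in $\Y'$: with ${\cal M}_{\bar x}$ the smallest closed $\bar S$-invariant subspace of $\Y'$ containing $\{\bar S^n\bar x : n\in\Z\}$, the identity $\sigma_{AAP(\N,\X)}(x) = \emptyset$ is equivalent to $\bar x = 0$ (i.e.\ $x \in AAP(\N,\X)$), and when $\bar x \neq 0$ one has $\sigma_{AAP(\N,\X)}(x) = \sigma(\bar S|_{{\cal M}_{\bar x}})$. The proof is a verbatim copy of that of Lemma \ref{lem 4}, since it uses only $\bar S$ being an isometry satisfying the resolvent bound.

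With these preliminaries the main argument mirrors that of Theorem \ref{the tec} exactly. Assume for contradiction that $x \notin AAP(\N,\X)$, so $\bar x \neq 0$ and $\sigma_{AAP(\N,\X)}(x) = \sigma(\bar S|_{{\cal M}_{\bar x}})$ is a nonempty countable closed subset of $\Gamma$; it therefore has an isolated point $\xi_0$. By Lemma \ref{lem 2}, $\xi_0$ is at most a first-order pole of $g(\lambda) = R(\lambda,\bar S)\bar x$, and since $\xi_0$ is isolated in the spectrum of $\bar S|_{{\cal M}_{\bar x}}$, the standard functional-analytic fact cited in the proof of Theorem \ref{the tec} produces a nonzero eigenvector $w_0 \in {\cal M}_{\bar x}$ with $\bar S w_0 = \xi_0 w_0$. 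Computing as in (\ref{30}) gives $R(\lambda,\bar S)w_0 = (\lambda-\xi_0)^{-1}w_0$, which has a genuine first-order pole at $\xi_0$. On the other hand the Tauberian hypothesis together with Lemma \ref{lem 3} renders $\xi_0$ removable for $g(\lambda)$; since $w_0 \in {\cal M}_{\bar x}$, the regularity-transfer argument of part (ii) of the $\Y'$-analog of Lemma \ref{lem 4} carries removability over to $R(\lambda,\bar S)w_0$, forcing $w_0 = 0$, a contradiction.

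I expect the isometry verification for $\bar S$ on $\Y'$ to be the main obstacle, since it is the only step that genuinely uses Condition H rather than transcribing existing arguments; in particular, a description of the quotient norm analogous to $\|\bar x\|_\Y = \limsup_n \|x(n)\|$ is not as transparent for $AAP(\N,\X)$ as it is for $c_0$. Once the isometry is in hand, the remainder is a line-for-line transfer of arguments already developed for the $c_0$-quotient.
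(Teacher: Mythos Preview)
Your proposal is correct and follows exactly the approach the paper intends: the paper does not give a separate proof of Theorem \ref{the tec2} but states just before Theorem \ref{the main4} that the proofs of the analogs are ``identically similar'' to those of Theorems \ref{the main}, \ref{the tec}, \ref{the main2} once $c_0$ is replaced by a closed subspace ${\cal M}$ satisfying Condition H (here ${\cal M}=AAP(\N,\X)$). Your explicit verification that $\bar S$ is a surjective isometry on $l^\infty(\X)/AAP(\N,\X)$ --- using bi-invariance to produce a preimage $z'\in AAP(\N,\X)$ of any $z\in AAP(\N,\X)$ under $S$, with the freedom to adjust the first coordinate --- fills in precisely the detail the paper leaves to the reader, and the rest is indeed a line-for-line transfer.
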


\begin{theorem}\label{the main7}
For equation (\ref{eq}) assume that $(y(n))\in AAP(\N,\X)$, and the
operator $B$ in equation (\ref{eq}) has $\sigma (B)\cap \Gamma$ as a
countable set. Then, each bounded solution $(x(n))$ of (\ref{eq}) is
asymptotically almost periodic, provided that for each $\xi_0 \in
\sigma (B)\cap \Gamma$ the following holds
\begin{equation}\label{min6}
\lim_{\lambda \downarrow \xi_0} (\lambda -\xi_0)R(\lambda , \bar
S)\bar x =0 .
\end{equation}
\end{theorem}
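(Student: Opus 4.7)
My plan is to mirror the proof of Theorem \ref{the main2}, but replacing the role of $c_0$ by $AAP(\N,\X)$ throughout, and then invoking Theorem \ref{the tec2} in place of Theorem \ref{the tec}. The whole argument reduces to verifying that the ingredients used in the $c_0$-setting still work in the quotient $\Y_{AAP} := l^\infty(\X)/AAP(\N,\X)$.

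The first step is to check that $AAP(\N,\X)$ satisfies Condition H. Bi-invariance under $S$ follows at once from the decomposition $x(n)=y(n)+z(n)$ with $y\in c_0$ and $z$ almost periodic, since translates of $c_0$ lie in $c_0$ and translates of an almost periodic sequence on $\Z$ remain almost periodic; invariance under pointwise application of any $A\in L(\X)$ is immediate from the same decomposition together with the continuity of $A$ (the restriction to $\N$ of the almost periodic sequence $(A z(n))_{n\in\Z}$ is again almost periodic by the spanning definition given in the excerpt). Finally $c_0\subset AAP(\N,\X)$ by definition. From these properties, $AAP(\N,\X)$ is closed and $S$-invariant, so the quotient operator $\bar S$ on $\Y_{AAP}$ is a well-defined surjective isometry and the resolvent estimate (\ref{112}) carries over verbatim.

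Next I would establish the analog of Lemma \ref{lem 1}: if $x=(x(n))$ is a bounded solution of (\ref{eq}) with $(y(n))\in AAP(\N,\X)$, then $\sigma_{AAP(\N,\X)}(x)\subset \sigma(B)\cap\Gamma$. Since $\bar y=0$ in $\Y_{AAP}$, the computation
\begin{equation*}
\bar S\bar x=\bar B\bar x,\qquad \bar x=(\lambda-\bar B)R(\lambda,\bar S)\bar x
\end{equation*}
goes through exactly as in (\ref{eq2})--(\ref{eq3}), using Condition H(ii) to make sense of $\bar B$ as a bounded operator on $\Y_{AAP}$ with $\sigma(\bar B)\subset\sigma(B)$. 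Hence $g(\lambda)=R(\lambda,\bar S)\bar x$ extends analytically across every $\xi_1\in\Gamma\setminus\sigma(B)$, giving the desired inclusion. In particular $\sigma_{AAP(\N,\X)}(x)$ is countable.

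With these ingredients, hypothesis (\ref{min6}) holds for every $\xi_0\in\sigma_{AAP(\N,\X)}(x)$, so Theorem \ref{the tec2} applies and yields $\bar x=0$ in $\Y_{AAP}$, which is precisely the statement that $(x(n))$ is asymptotically almost periodic. The main obstacle, as already indicated, is the verification of Condition H for $AAP(\N,\X)$; once that is in place, Lemmas \ref{lem 2}, \ref{lem 3} and \ref{lem 4} transfer mechanically because their proofs only use the resolvent estimate for the surjective isometry $\bar S$ and the injectivity of $R(\lambda,\bar S)$ off $\Gamma$, both of which are formal consequences of working in a quotient by an $S$-invariant closed subspace. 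Thus the reduction to Theorem \ref{the tec2} is the cleanest route and avoids re-doing any of the contour integral or Liouville arguments.
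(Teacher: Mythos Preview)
Your proposal is correct and follows essentially the same approach as the paper. The paper does not give a separate proof of this theorem at all: it simply remarks that replacing $c_0$ by any subspace $\mathcal M$ satisfying Condition~H (with $AAP(\N,\X)$ given as the motivating example) yields analogs of Theorems \ref{the main}, \ref{the main2} and \ref{the tec} whose proofs are ``identically similar,'' so your write-up is in fact more detailed than what the paper provides, spelling out the verification of Condition~H and the transfer of Lemma~\ref{lem 1} to the quotient $l^\infty(\X)/AAP(\N,\X)$ before invoking Theorem~\ref{the tec2}.
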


\bibliographystyle{amsplain}

\end{document}